\documentclass[a4paper,twoside,10pt]{article}

\usepackage[a4paper,left=3cm,right=3cm, top=3cm, bottom=3cm]{geometry}
\usepackage[latin1]{inputenc}
\usepackage{mathrsfs}
\usepackage{mathtools}
\usepackage{graphicx}
\usepackage{epstopdf}
\epstopdfsetup{
    suffix=,
}
\graphicspath{{figs/}}
\usepackage{amsmath}
\usepackage{amsthm}
\usepackage{amssymb}
\usepackage{stmaryrd}
\usepackage{float}
\usepackage{bigints}
\usepackage{cite}
\usepackage{color}
\usepackage[abs]{overpic}
\usepackage[font=footnotesize,labelfont=bf]{caption}
\usepackage{cases}
\usepackage{tikz}
\usepackage{algorithmic}
\usepackage{rotating}
\usepackage{blkarray}
\usetikzlibrary{matrix,calc,arrows}
\usepackage[author={Lorenzo}]{pdfcomment}
\usepackage{soul,xcolor}
\usepackage{enumitem}  
\usepackage{verbatim}
\usepackage{graphicx}
\usepackage{subcaption}
\usepackage{mwe}
\usepackage{algorithm}

\restylefloat{table}
\theoremstyle{plain}
\newtheorem{thm}{Theorem}[section]
\newtheorem{cor}[thm]{Corollary}
\newtheorem{lem}[thm]{Lemma}

\theoremstyle{definition}

\theoremstyle{remark}



\newcommand{\Norm}[1]{{\left\|{#1} \right\|}}
\newcommand{\SemiNorm}[1]{{\left|{#1} \right|}}

\newcommand{\qp}{q_\p}
\newcommand{\E}{K}
\newcommand{\p}{p}
\newcommand{\h}{h}

\newcommand{\e}{e}
\newcommand{\hE}{\h_\E}

\newcommand{\he}{\h_\e}
\newcommand{\taun}{\mathcal T_\h}

\newcommand{\uh}{u_\h}
\newcommand{\vh}{v_\h}
\newcommand{\Vh}{V_\h}
\newcommand{\VhE}{\Vh(\E)}
\newcommand{\qpmt}{q_{\p-2}}
\newcommand{\qpmoe}{q_{\p-1}^\e}
\newcommand{\qpmtE}{\qpmt^\E}
\newcommand{\qpE}{\qp^\E}
\newcommand{\Pbb}{\mathbb P}
\newcommand{\Rbb}{\mathbb R}

\newcommand{\SE}{S^\E}
\newcommand{\Piz}{\Pi^{0}_{\p-2}}
\newcommand{\Pizp}{\Pi^{0}_{\p}}
\newcommand{\Pize}{\Pi^{0,\e}_{\p-1}}
\newcommand{\Pinabla}{\Pi^{\nabla}_{\p}}
\newcommand{\malphae}{m_\alpha^\e}
\newcommand{\boldalpha}{\boldsymbol \alpha}
\newcommand{\mboldalpha}{m_{\boldalpha}}
\newcommand{\xbf}{\mathbf x}
\newcommand{\xE}{\xbf_\E}

\newcommand{\aE}{a^\E}
\newcommand{\ahE}{\aE_\h}
\newcommand{\NE}{N_\E}
\DeclareMathOperator{\dof}{dof}
\newcommand{\nbf}{\mathbf n}
\newcommand{\nbfE}{\nbf_\E}
\newcommand{\EcalE}{\mathcal E^\E}
\newcommand{\etaE}{\eta_\E}
\DeclareMathOperator{\HOT}{HOT}
\newcommand{\omegaE}{\omega_\E}
\newcommand{\vI}{v_I}
\newcommand{\tauE}{\mathcal T(\E)}
\newcommand{\vC}{v_C}
\newcommand{\etatilde}{\widetilde \eta}
\newcommand{\etatildeE}{\etatilde_\E}

\setcounter{secnumdepth}{4}
\setcounter{tocdepth}{4}

\date{}
\title{\normalsize{The role of stabilization in the virtual element method: a survey}}
\author{\small{Lorenzo Mascotto \!\!\!\thanks{
Department of Mathematics and Applications, University of Milano-Bicocca, 20125 Milan, Italy
(lorenzo.mascotto@unimib.it);
IMATI-CNR, Pavia, Italy;
Faculty of Mathematics, University of Vienna, 1090 Vienna, Austria (lorenzo.mascotto@univie.ac.at)}}}
\date{}

\begin{document}
\maketitle

\begin{abstract}
\noindent  The virtual element method was introduced 10 years ago
and it has generated a large number of theoretical results and applications ever since.
Here, we overview the main mathematical results
concerning the stabilization term of the method
as an introduction for newcomers in the field.
In particular, we summarize the proofs of some results for two dimensional ``nodal''
conforming and nonconforming virtual element spaces
to pinpoint the essential tools used in the stability analysis.
We discuss their extensions to several other virtual elements.
Finally, we show several ways to prove interpolation estimates,
including a recent one that is based on
employing the stability bounds.
 
\medskip\noindent
\textbf{AMS subject classification}: 65N12; 65N15.

\medskip\noindent
\textbf{Keywords}: virtual element method;
stability bound; interpolation estimate.
\end{abstract}

\section{Introduction} \label{section:introduction}

The virtual element method was introduced ten years ago~\cite{BeiraodaVeiga-Brezzi-Cangiani-Manzini-Marini-Russo:2013}
as a generalization of the finite element method to polytopic meshes.
Local virtual element spaces consist of solutions to local problems with polynomial data.
Therefore, virtual element functions are not known in closed form;
in the spirit of the mimetic finite differences~\cite{Lipnikov:Manzini-Shashkov:2014, BeiraodaVeiga-Lipnikov-Manzini:2014},
only the evaluation of their degrees of freedom is required in the design of the method.

Consequently, the bilinear forms appearing in the variational formulation
of given partial differential equations are not computable
and have to be approximated based on two main ingredients:
projections from local virtual element spaces onto polynomial spaces;
bilinear forms that stabilize the scheme.

This entails that the error analysis for virtual elements
has the form of a Strang-type result,
where several variational crimes have to be taken into account.
In particular, one has to cope with certain
stability bounds and interpolation estimates in virtual element spaces.
\medskip

The first work on the virtual element method
contains the following statement concerning the stability estimates
\cite[Section~4.6]{BeiraodaVeiga-Brezzi-Cangiani-Manzini-Marini-Russo:2013}:
\medskip

\fbox{\begin{minipage}{\textwidth}
\noindent ``\emph{In general, the choice of the bilinear form~$\SE(\cdot,\cdot)$
[the local virtual element stabilization]
would depend on the problem and on the degrees of freedom.
From $(4.20)$ it is clear that~$\SE(\cdot,\cdot)$
must scale like~$\aE(\cdot, \cdot)$
[the ``grad-grad bilinear form]
on the kernel of $\Pinabla$ [an $H^1$ polynomial projector].
Choosing then the canonical basis $\varphi_1, \dots, \varphi_{\NE}$
as
\[
\chi_i(\varphi_j) = \delta_{i,j}, \quad
i,j =1, \dots, \NE,
\qquad \text{[$\chi_i$ is the $i$-th local degree of freedom]}
\]
the local stiffness matrix is given by
\[
\ahE(\varphi_i,\varphi_j)
= \aE(\Pinabla \varphi_i, \Pinabla \varphi_j)
  + \SE( (I-\Pinabla) \varphi_i , (I-\Pinabla) \varphi_j).
\]
In our case it is easy to check that,
on a ``reasonable'' polygon,
$\aE(\varphi_i,\varphi_j) \approx 1$.
Note that this holds true for all
$i = 1, 2, \dots,\NE$ [$\NE$ is the dimension of the local discrete space]
since we defined the local degrees of freedom suitably. [\dots]
However, several types of misbehaviour can occur for awkwardly-shaped polygons,
in particular if two or more vertices tend to coalesce, although, in our numerical experiments,
the method appears to be quite robust in this respect.}''
\end{minipage}}
\medskip

So, some facts were made clear since the very inception of the method:
\begin{itemize}
    \item the stabilization is not required to have approximation properties:
    it only has to scale as the corresponding ``continuous'' bilinear form;
    \item a reasonable choice for the stabilization is the ``dofi-dofi'' one given by
    $\SE(\varphi_i,\varphi_j)=\delta_{i,j}$
    for all~$i$, $j=1,\dots,\NE$;
    \item the ``dofi-dofi'' stabilization needs to be carefully tuned/changed in presence of ``\emph{awkwardly-shaped polygons}''.
\end{itemize}
Similar considerations are contained in other works
tracing back to the early years of the virtual elements literature,
say, the period 2013-2016:
there, one can find heuristic motivations but no proofs
of the stability bounds, i.e., bounds of the form
$\alpha^* \aE(\vh,\vh) \le \SE(\vh,\vh) \le \alpha^* \aE(\vh,\vh)$
for suitable discrete functions~$\vh$
and positive constants~$\alpha_* < \alpha^*$.

In~2017, the first paper~\cite{BeiraodaVeiga-Lovadina-Russo-2017}
on the theoretical aspects of the stabilization in
virtual elements was published:
stability properties for two dimensional nodal conforming virtual elements
were investigated on rather general geometries.
Ever since, several related contributions have been proposed;
amongst them we mention the other three pioneering works~\cite{Chen-Huang:2018, Brenner-Guan-Sung:2017, Brenner-Sung:2018}.
Most of the literature on this topic
is concerned with nodal conforming virtual elements;
fewer works can be found on other types of virtual elements
such as face, edge, Stokes, immersed-like virtual elements.
\medskip

In light of this,
the virtual element literature on the theoretical aspects of the stabilization
might be classified into three periods:
\begin{itemize}
    \item the early years (2013-2016), when the properties of the stabilization were introduced and motivated heuristically;
    \item the pioneering years (2017-2018),
    when the first papers~\cite{BeiraodaVeiga-Lovadina-Russo-2017, Brenner-Guan-Sung:2017, Brenner-Sung:2018, Chen-Huang:2018}
    on the theoretical aspects of the stabilization were published;
    \item the consolidation years (2019-2023),
    when several other works on the topic were written,
    the pioneering analysis was generalized,
    and other types of virtual elements were considered.
\end{itemize}

We deemed useful to collect and review
all contributions that we are aware of
about the theoretical aspects of the stabilization.
We hope that this work could represent a starting guide in this area.
More details and the topics we are going to cover
can be found at the end of this section.
\medskip

\noindent\textbf{Disclaimer.}
As we are interested here only in reviewing the literature on theoretical aspects of the virtual element stabilization,
we do not review contributions on more ``practical'' issues
as those in~\cite{Wriggers-Hudobivnik:2017, Wriggers-Reddy-Rust-Hudobivnik:2017, BeiraodaVeiga-Dassi-Russo:2017, Mascotto:2018, Dassi-Mascotto:2018, Russo-Sukumar:2023}.

\paragraph*{Notation and assumptions.}
Given~$D \subset \Rbb^d$, $d=1,2,3$, a Lipschitz domain
with measure~$\vert D \vert$,
we introduce the Sobolev space~$H^s(D)$, $s\ge 0$,
and endow it with the usual norm~$\Norm{\cdot}_{s,D}$,
seminorm~$\SemiNorm{\cdot}_{s,D}$,
and bilinear form~$(\cdot,\cdot)_{s,D}$.
If~$s=0$, we let the Sobolev space~$H^0(D)$ be the Lebesgue space~$L^2(D)$.
Negative order Sobolev spaces are defined by duality.

We set the spaces~$\Pbb_\p(\E)$ and~$\Pbb_\p(\e)$
of polynomials of maximum degree~$\p$
over a polygon~$\E$ and an interval~$\e$.
We use the notation~$\Pbb_{-1}(\E)= \{ 0 \}$.
For the space~$\Pbb_\p(\E)$,
given~$\xE = (x_\E,y_\E)$ and~$\hE$ denote the centroid and diameter of~$\E$,
it is convenient to introduce the basis~$\{ \mboldalpha \}$ of scaled and centred monomials
\begin{equation} \label{monomials}
\mboldalpha(\xbf)
\!:=\! \left( \frac{\xbf-\xE}{\hE}   \right)^{\boldalpha}
\!\!=\! \left( \frac{x - x_\E}{\hE} \right)^{\alpha_1}
  \! \left( \frac{y - y_\E}{\hE} \right)^{\alpha_2},
\; \boldalpha = (\alpha_1,\alpha_2) \in \Rbb^2,\;
\vert \boldalpha \vert=\alpha_1+\alpha_2
= 0,\dots,\p.
\end{equation}
Analogously, we can define the basis~$\{ \malphae \}$, $\alpha=1,\dots,\p$,
of shifted and scaled monomials for~$\Pbb_{\p}(\e)$.

Given two quantities~$a$ and~$b$,
we write~$a \lesssim b$ meaning that
there exists~$c$ depending on the shape of~$\E$,
but not on its size, such that~$a \le c \ b$.

\paragraph*{Outline.}
We prove the stability bounds
for two dimensional nodal conforming virtual elements in Section~\ref{section:nodal}:
here, we focus on two paradigmatic stabilizations;
derive corresponding stability bounds;
underline what the main tools in the proof are;
review the relevant literature.
We discuss the generalization to nonconforming ``nodal'' elements
and arbitrarily regular ``elliptic-type'' elements in Section~\ref{section:ncVEM}.
``Nonelliptic-type'' elements,
such as face, edge, Stokes-like, and immersed virtual elements,
are overviewed in Section~\ref{section:non-elliptic}.
Comments on ``$\p$-version'' spaces,
and the role of the stabilization in relability and efficiency bounds
for residual error estimators
are given in Section~\ref{section:p-version-adaptivity}.
In Section~\ref{section:interpolation},
we show that the stability bound easily implies interpolation estimates.
We draw some conclusions in Section~\ref{section:conclusions}.

\section{Basic results: stability in nodal conforming virtual elements} \label{section:nodal}
In this section, we review the role of the stabilization in two dimensional nodal conforming virtual elements.
After designing local virtual elements with a set of unisolvent degrees of freedom,
we describe computable polynomial projectors and discrete bilinear forms in Section~\ref{subsection:nodal-VE}.
We introduce two paradigmatic stabilizations
and provide an easy proof of stability bounds
in Section~\ref{subsection:stab-nodal}.
In Section~\ref{subsection:literature-nodal},
we review the essential literature concerning the theory
behind the stabilization in nodal conforming virtual elements.

\subsection{Nodal conforming virtual elements} \label{subsection:nodal-VE}
Following~\cite{BeiraodaVeiga-Brezzi-Cangiani-Manzini-Marini-Russo:2013},
given a polygonal element~$\E$ and a positive integer number~$\p$,
we define the nodal conforming virtual element
\begin{equation} \label{c-virtual-element-space}
\VhE:=
\left\{ \vh \in H^1(\E) \ \middle| \
\Delta \vh \in \Pbb_{\p-2}(\E),\;
\vh{}_{|\e} \in \Pbb_\p(\e) \quad
\forall \e \in \EcalE
\right\}.
\end{equation}
We endow the space~$\VhE$ with the following set of unisolvent degrees of freedom:
given~$\vh$ in~$\VhE$,
\begin{itemize}
    \item the point values of~$\vh$ at the vertices of~$\E$;
    \item on each edge~$\e$ of~$\E$,
    the point values of~$\vh$ at the~$\p-1$ internal Gau\ss-Lobatto nodes;
    \item given the scaled monomial basis~$\{ \mboldalpha \}$
    of~$\Pbb_{\p-2}(\E)$ as in~\eqref{monomials}, the scaled moments
    \[
    \frac{1}{\vert \E \vert} \int_\E \mboldalpha \ \vh.
    \]
\end{itemize}
The second set of degrees of freedom can be replaced
by suitably scaled edge moments
or uniformly spaced nodes.
We collect the above degrees of freedom in the set
$\{ \dof_j \}_{j=1}^{\NE}$,
$\NE$ being the dimension of~$\VhE$.

The degrees of freedom of~$\VhE$ allow for the computation of several polynomial projections~\cite{BeiraodaVeiga-Brezzi-Cangiani-Manzini-Marini-Russo:2013}.
In what follows, we need the operator~$\Pinabla : H^1(\E) \to \Pbb_{\p}(\E)$ defined as
\begin{equation} \label{H1-projector}
\begin{cases}
    (\nabla \qp, \nabla (v-\Pinabla v))_{0,\E}
                \qquad \forall \qp \in \Pbb_\p(\E)\\
    \int_{\partial \E} (v-\Pinabla v) = 0 ,
\end{cases}
\end{equation}
and the operator~$\Piz : L^2(\E) \to \Pbb_{\p-2}(\E)$ defined as
\begin{equation} \label{L2-projector}
(\qpmt, v-\Piz v)_{0,\E} 
\qquad\qquad \forall \qpmt \in \Pbb_{\p-2}(\E).
\end{equation}
Given the degrees of freedom of a function~$\vh$ in~$\VhE$,
we can compute~$\Pinabla \vh$ and~$\Piz \vh$;
see~\cite{BeiraodaVeiga-Brezzi-Cangiani-Manzini-Marini-Russo:2013}.

Other options to fix the constant part of~$\Pinabla$,
see the second condition in~\eqref{H1-projector},
can be found in the literature.
For instance, it is alternatively possible to fix the average in the bulk
or the arithmetic average of the values at the vertices of the polygon.

The standard discretization of the bilinear form
$\aE(\cdot,\cdot) := (\nabla \cdot,\nabla \cdot)_{0,\E}$
in nodal conforming virtual elements is given by
\begin{equation} \label{local-discrete-bf}
\ahE(\uh,\vh) 
:= \aE(\Pinabla \uh, \Pinabla \vh) 
   + \SE ( (I-\Pinabla) \uh, (I-\Pinabla) \vh ).
\end{equation}
The bilinear form~$\SE(\cdot , \cdot) : \VhE \times \VhE$ is required to be computable via the degrees of freedom
and satisfies the following stability bounds:
there exist $0 <\alpha_* \le \alpha^*$
independent of~$\hE$ such that
\begin{equation} \label{stability-bounds}
\alpha_* \SemiNorm{\vh}_{1,\E}^2
\le \SE(\vh, \vh)
\le \alpha_* \SemiNorm{\vh}_{1,\E}^2
\qquad\qquad \forall \vh \in \VhE \cap \ker(\Pinabla).
\end{equation}

\subsection{Two stabilizations for nodal conforming virtual elements} \label{subsection:stab-nodal}
We introduce two stabilizations
that are common in the literature of nodal conforming virtual elements.
The first one is known as the ``dofi-dofi'' stabilization~\cite{BeiraodaVeiga-Brezzi-Cangiani-Manzini-Marini-Russo:2013}
and is given by
\begin{equation} \label{dofi-dofi-stab}
    \SE(\uh,\vh)
    := \sum_{j=1}^{\NE} \dof_j(\uh) \dof_j(\vh) 
    \qquad\qquad \forall \uh, \vh \in \VhE.
\end{equation}
The second one,
which we shall refer to as ``projected'' stabilization, 
is~\cite{BeiraodaVeiga-Chernov-Mascotto-Russo:2018}
\begin{equation} \label{projected-stabilization}
    \SE(\uh,\vh)
    := \hE^{-1} (\uh,\vh)_{0, \partial\E}
       + \hE^{-2} (\Piz \uh, \Piz \vh)_{0,\E}
    \qquad\qquad \forall \uh, \vh \in \VhE.
\end{equation}
Both stabilizations are computable using the degrees of freedom of~$\VhE$.

\begin{lem} \label{lemma:stability}
The bilinear forms~$\SE(\cdot ,\cdot)$ in~\eqref{dofi-dofi-stab} 
and~\eqref{projected-stabilization}
satisfy~\eqref{stability-bounds}.
\end{lem}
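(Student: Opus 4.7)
\emph{Strategy.} The plan is to prove each of the two inequalities in~\eqref{stability-bounds} in parallel for the two stabilizations via a scaling-plus-norm-equivalence argument. First I would scale $\E$ to unit diameter via $\hat\xbf=(\xbf-\xE)/\hE$; under this affine map $\SemiNorm{\vh}_{1,\E}^2$ and each version of $\SE(\vh,\vh)$ transform with matching powers of $\hE$, so it suffices to work with scale-free inequalities on the reference polygon and then translate back to $\E$ by the shape-dependent hidden constant in $\lesssim$.

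\emph{Upper bounds.} For the dofi-dofi form~\eqref{dofi-dofi-stab} I would bound each degree of freedom as a continuous linear functional on $H^1(\E)$: vertex and Gau\ss-Lobatto node values via a trace inequality followed by a polynomial inverse estimate along the edge (using $\vh{}_{|\e}\in\Pbb_\p(\e)$), and the scaled bulk moments by Cauchy-Schwarz together with the $L^2$-normalization of~$\mboldalpha$. The resulting $L^2(\E)$ factors are then converted into $\hE\,\SemiNorm{\vh}_{1,\E}$ through a Poincar\'e-type inequality on $\ker(\Pinabla)$, the zero-boundary-average condition in~\eqref{H1-projector} killing the constant mode. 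For the projected form~\eqref{projected-stabilization} the boundary term is controlled with the classical trace inequality and then Poincar\'e, while the bulk term uses $L^2$-stability of~$\Piz$ and Poincar\'e once more.

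\emph{Lower bounds.} The key observation is that both $\sqrt{\SE(\cdot,\cdot)}$ and $\SemiNorm{\cdot}_{1,\E}$ are norms on the finite-dimensional space $\VhE\cap\ker(\Pinabla)$. For~\eqref{dofi-dofi-stab} this follows immediately from the unisolvence of the degrees of freedom. For~\eqref{projected-stabilization} I would rely on the integration-by-parts identity
\[
\SemiNorm{\vh}_{1,\E}^2 = -(\Delta\vh,\vh)_{0,\E} + (\vh,\nabla\vh\cdot\nbfE)_{0,\partial\E} = -(\Delta\vh,\Piz\vh)_{0,\E} + (\vh,\nabla\vh\cdot\nbfE)_{0,\partial\E},
\]
where the second equality uses $\Delta\vh\in\Pbb_{\p-2}(\E)$ together with the defining property~\eqref{L2-projector} of $\Piz$; thus $\SE(\vh,\vh)=0$ forces both $\vh_{|\partial\E}=0$ and $\Piz\vh=0$, whence $\SemiNorm{\vh}_{1,\E}=0$ and the zero boundary trace then gives $\vh=0$. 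Analogously $\SemiNorm{\vh}_{1,\E}=0$ with $\vh\in\ker(\Pinabla)$ yields $\vh$ constant, and since $\Pinabla c=c$ the only admissible constant is zero. Finite-dimensional norm equivalence on the reference kernel, transferred back by scaling, produces~\eqref{stability-bounds}.

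\emph{Main obstacle.} The norm-equivalence step is non-quantitative, so the real work is controlling the hidden constants \emph{uniformly} over the admissible class of polygons: the Poincar\'e constant on $\ker(\Pinabla)$, the edge polynomial-inverse constants, and the finite-dimensional norm-equivalence constants all depend on the shape of $\E$, and one must pin down the right shape-regularity hypotheses (typically star-shapedness with respect to a ball of comparable radius together with a lower bound on edge lengths) under which all of them stay bounded simultaneously. This uniformity question is precisely the one flagged by the quote on ``awkwardly-shaped polygons'' and is the true content behind the deceptively short statement of the lemma.
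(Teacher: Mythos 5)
Your upper-bound argument and your integration-by-parts identity match the paper's, but the way you close the lower bound does not work as stated, and this is where the real content of the lemma lies. You prove that $\sqrt{\SE(\cdot,\cdot)}$ and $\SemiNorm{\cdot}_{1,\E}$ are both norms on the finite-dimensional space $\VhE\cap\ker(\Pinabla)$ and then invoke finite-dimensional norm equivalence ``on the reference kernel, transferred back by scaling''. Scaling to unit diameter removes the $\hE$-dependence, but it does not reduce the problem to a single reference configuration: the space $\VhE$ itself changes with the polygon (number of edges, their lengths, the angles), so after rescaling you are still facing an infinite, non-compact family of unit-diameter polygons, each carrying a different finite-dimensional space. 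Norm equivalence on each fixed polygon gives constants with no uniform control over that family, and there is no compactness argument available to extract uniformity. You correctly flag this as ``the main obstacle'', but flagging it is not resolving it --- and the lemma's constants $\alpha_*,\alpha^*$ are precisely what must be controlled.

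The paper's proof avoids the qualitative step entirely by making your identity
$\SemiNorm{\vh}_{1,\E}^2=-(\Delta\vh,\Piz\vh)_{0,\E}+(\vh,\nbfE\cdot\nabla\vh)_{0,\partial\E}$
quantitative. The two missing ingredients are the inverse estimate for functions with polynomial Laplacian, $\Norm{\Delta\vh}_{0,\E}\lesssim\hE^{-1}\SemiNorm{\vh}_{1,\E}$ (see~\eqref{inverse-estimate-polynomial-Laplacian}), and the polynomial inverse estimate on the boundary, $\SemiNorm{\qp}_{\frac12,\partial\E}\lesssim\hE^{-\frac12}\Norm{\qp}_{0,\partial\E}$ (see~\eqref{inverse-estimate-polynomial-boundary}), combined with a duality pairing of the Neumann trace: bounding $\Norm{\nbfE\cdot\nabla\vh}_{-\frac12,\partial\E}\lesssim\hE\Norm{\Delta\vh}_{0,\E}+\SemiNorm{\vh}_{1,\E}$ and Cauchy--Schwarz on the bulk term yields
$\SemiNorm{\vh}_{1,\E}^2\lesssim\SemiNorm{\vh}_{1,\E}\bigl(\hE^{-1}\Norm{\Piz\vh}_{0,\E}+\hE^{-\frac12}\Norm{\vh}_{0,\partial\E}\bigr)$,
i.e.\ the lower bound of~\eqref{stability-bounds} for~\eqref{projected-stabilization} with constants depending only on the shape-regularity parameters entering those inverse estimates. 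Note also that the lower bound obtained this way needs no use of $\vh\in\ker(\Pinabla)$, which is what permits Corollary~\ref{corollary:stability}; your norm-equivalence route, by contrast, genuinely needs the kernel to make $\SemiNorm{\cdot}_{1,\E}$ definite and so would not yield that generalization. Finally, be aware that the paper only carries out the details for the projected stabilization~\eqref{projected-stabilization} and refers to the literature for the ``dofi-dofi'' case; your sketch of the dofi-dofi upper bound (edge $L^\infty$--$L^2$ inverse estimates for the nodal values plus trace and Poincar\'e) is the right start for that case, but its lower bound would again require a quantitative argument rather than norm equivalence.
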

\begin{proof}
We prove the assertion for the ``projected'' stabilization~\eqref{projected-stabilization} only;
the details for the ``dofi-dofi'' stabilization~\eqref{dofi-dofi-stab} are similar but slightly more involved.
The assertion for the stabilization in~\eqref{dofi-dofi-stab}
follows combining the bounds for the stabilization in~\eqref{projected-stabilization}
and the techniques, e.g., in~\cite{BeiraodaVeiga-Lovadina-Russo-2017, Brenner-Sung:2018, Chen-Huang:2018}.
\medskip

\noindent\textbf{The lower bound.}
We recall an inverse estimate for functions with polynomial Laplacian;
see, e.g., \cite[Lemma~10]{Cangiani-Georgoulis-Pryer-Sutton:2017}
or~\cite[Theorem~2]{BeiraodaVeiga-Chernov-Mascotto-Russo:2018}:
\begin{equation} \label{inverse-estimate-polynomial-Laplacian}
\Norm{\Delta \vh}_{0,\E} \lesssim \hE^{-1} \SemiNorm{\vh}_{1,\E}
\qquad\qquad
\forall \vh \in H^1(\E) , \; \Delta \vh \in \Pbb_{\p-2}(\E).
\end{equation}
Furthermore,
we have the polynomial inverse estimate
\begin{equation} \label{inverse-estimate-polynomial-boundary}
\SemiNorm{\qp}_{\frac12,\partial\E}
\lesssim \hE^{-\frac12}
            \Norm{\qp}_{0,\partial\E}
\qquad\qquad \forall \qp \in \mathcal C^0(\partial\E),\quad
\qp{}_{|\e} \in \Pbb_{\p}(\e),
\quad \forall \e \in \EcalE.
\end{equation}
Integrating by parts,
recalling that~$\Delta\vh$ belongs to~$\Pbb_{\p-2}(\E)$,
applying the Cauchy-Schwarz inequality,
invoking the definition of negative Sobolev norms,
and using the inverse estimates~\eqref{inverse-estimate-polynomial-Laplacian}
and~\eqref{inverse-estimate-polynomial-boundary},
the lower bound in~\eqref{stability-bounds} follows:
\[
\begin{split}
\SemiNorm{\vh}^2_{1,\E}
& = - \int_\E \Delta\vh \ \vh 
        + \int_{\partial \E} (\nbfE \cdot \nabla \vh) \vh
= - \int_\E \Delta\vh \ \Piz \vh
        + \int_{\partial \E} (\nbfE \cdot \nabla \vh) \vh \\
& \le \Norm{\Delta\vh}_{0,\E} \Norm{\Piz\vh}_{0,\E}
      + \Norm{\nbfE \cdot \nabla \vh}_{-\frac12,\partial \E} \SemiNorm{\vh}_{\frac12,\partial \E} \\
& \lesssim \Norm{\Delta\vh}_{0,\E} \Norm{\Piz\vh}_{0,\E}
      + (\hE \Norm{\Delta\vh}_{0,\E} + \SemiNorm{\vh}_{1,\E})
      \SemiNorm{\vh}_{\frac12,\partial \E} \\
& \lesssim \SemiNorm{\vh}_{1,\E}
    \left( \hE^{-1} \Norm{\Piz \vh}_{0,\E}
      + \hE^{-\frac12} \Norm{\vh}_{0,\partial \E} \right).
\end{split}
\]
\medskip

\noindent\textbf{The upper bound.}
Using the stability of~$\Piz$ in the~$L^2$ norm,
the trace inequality,
and the Poincar\'e inequality
(recall we are assuming~$\vh$
belongs to~$\ker(\Pinabla)$),
the upper bound in~\eqref{stability-bounds} follows:
\[
\SE(\vh,\vh)
:= \hE^{-1} \Norm{\vh}^2_{0,\partial\E}
    + \hE^{-2} \Norm{\Piz \vh}^2_{0,\E}
\le \hE^{-1} \Norm{\vh}^2_{0,\partial\E}
    + \hE^{-2} \Norm{\vh}^2_{0,\E}
\lesssim \SemiNorm{\vh}^2_{1,\E}.
\]

\end{proof}

From the proof of Lemma~\ref{lemma:stability},
we can see that
\begin{itemize}
    \item the lower bound in~\eqref{stability-bounds} follows from (polynomial and virtual) inverse estimates,
    but no zero average condition (based on the fact that $\vh$ belongs also to~$\ker(\Pinabla)$) is used;
    \item the upper bound in~\eqref{stability-bounds}
    is proven based only on ``direct estimates'',
    such as the Poincar\'e inequality and the trace inequality,
    and therefore is valid for $H^1$ functions with zero average.
\end{itemize}

For these reasons, Lemma~\ref{lemma:stability} immediately generalizes as follows.

\begin{cor} \label{corollary:stability}
The bilinear forms~$\SE(\cdot ,\cdot)$ in~\eqref{dofi-dofi-stab} and~\eqref{projected-stabilization}
satisfy the following stability bounds:
there exist~$0< \alpha_* \le \alpha^*$ such that
\[
\alpha_* \SemiNorm{\vh}_{1,\E}^2 
\le \SE(\vh,\vh)
\quad \forall \vh \in \VhE,
\qquad
\SE(v,v)
\le \alpha^* \SemiNorm{v}_{1,\E}^2 
\quad \forall v \in H^1(\E),\; (v,1)_{0, D} =0,
\]
where~$D$ is any subset with nonzero measure
of either~$\E$ or~$\partial \E$.
\end{cor}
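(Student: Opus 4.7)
The plan is to notice that essentially no new work is needed: the proof of Lemma~\ref{lemma:stability} already contains both inequalities once we track precisely where the assumption $v_h \in \ker(\Pinabla)$ was actually used. Following the remark that immediately precedes the corollary, the lower bound never invoked the kernel condition, and the upper bound invoked it only through a Poincar\'e inequality, which can be replaced by the zero-average hypothesis on~$D$.

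For the lower bound, I would rerun the integration-by-parts argument of Lemma~\ref{lemma:stability} verbatim: since $\Delta\vh \in \Pbb_{\p-2}(\E)$,
\[
\SemiNorm{\vh}_{1,\E}^2
= -\int_\E \Delta \vh\ \Piz \vh + \int_{\partial \E}(\nbfE \cdot \nabla \vh)\,\vh,
\]
Cauchy--Schwarz, the definition of negative Sobolev norms, and the inverse estimates~\eqref{inverse-estimate-polynomial-Laplacian}--\eqref{inverse-estimate-polynomial-boundary} give
\[
\SemiNorm{\vh}_{1,\E}^2 \lesssim \SemiNorm{\vh}_{1,\E}\bigl(\hE^{-1}\Norm{\Piz \vh}_{0,\E} + \hE^{-\frac12}\Norm{\vh}_{0,\partial\E}\bigr).
\]
Since no step used the kernel condition, the lower bound holds for every $\vh \in \VhE$, and an analogous adaptation handles the ``dofi-dofi'' form by the techniques quoted in the references of Lemma~\ref{lemma:stability}.

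For the upper bound, the only place where $\vh \in \ker(\Pinabla)$ entered the proof of Lemma~\ref{lemma:stability} was to obtain a Poincar\'e-type inequality of the form $\Norm{v}_{0,\E}\lesssim \hE\SemiNorm{v}_{1,\E}$. I would replace it by a Poincar\'e--Friedrichs estimate tailored to the hypothesis $(v,1)_{0,D}=0$: for $D\subset \E$ with positive area this is classical, while for $D\subset\partial\E$ with positive length one combines the $\partial\E$-mean-zero Poincar\'e inequality with a trace lift. In both cases the constant depends only on the shape of~$\E$ and on the relative size of~$D$. Coupling this bound with the standard trace inequality $\Norm{v}_{0,\partial\E}^2\lesssim \hE^{-1}\Norm{v}_{0,\E}^2 + \hE\SemiNorm{v}_{1,\E}^2$ and the $L^2$-stability of $\Piz$ yields
\[
\SE(v,v) \le \hE^{-1}\Norm{v}_{0,\partial\E}^2 + \hE^{-2}\Norm{v}_{0,\E}^2 \lesssim \SemiNorm{v}_{1,\E}^2,
\]
which is exactly the claimed upper bound.

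The main obstacle I foresee is ensuring that the Poincar\'e--Friedrichs constant associated with the zero-average condition on~$D$ remains uniform in~$\hE$ and depends only on the shape of~$\E$ together with the relative size of~$D$. For $D\subset \E$ this is a standard scaling argument on a shape-regular reference polygon; for~$D$ living on the boundary one additionally needs a trace-Poincar\'e estimate whose constant is controlled by the same shape parameters. Once this geometric input is granted, the argument reduces to a literal repetition of the proof of Lemma~\ref{lemma:stability}, so no substantive new ideas are required.
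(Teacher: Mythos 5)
Your proposal is correct and follows essentially the same route as the paper: the paper justifies the corollary precisely by observing that the lower bound in Lemma~\ref{lemma:stability} never used the kernel condition, while the upper bound relied only on trace and Poincar\'e-type inequalities, so the kernel hypothesis can be replaced by the zero-average condition on~$D$. Your additional care about the uniformity of the Poincar\'e--Friedrichs constant is a reasonable elaboration of the same argument, not a different approach.
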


The stability bounds in Lemma~\ref{lemma:stability} and Corollary~\ref{corollary:stability}
can be easily extended to the case of the so-called enhanced virtual element spaces
introduced in~\cite{Ahmad-Alsaedi-Brezzi-Marini-Russo:2013}.
More precisely, define
\begin{equation} \label{c-virtual-element-space-enhanced}
\begin{split}
\VhE:=
\Big\{ \vh \in H^1(\E) 
& \ \Big| \
\Delta \vh \in \Pbb_{\p}(\E);
\quad \vh{}_{|\e} \in \Pbb_\p(\e) ; \quad
\vh{}_{|\partial\E} \in \mathcal C^0(\partial \E); \\
& \quad \int_\E (\vh-\Pinabla \vh) \mboldalpha = 0 \; \forall \vert \boldalpha \vert=\p-1,\p \Big\}.
\end{split}
\end{equation}
This space can be endowed with the same degrees of freedom of the standard space virtual element space in~\eqref{c-virtual-element-space}.
Such degrees of freedom allow us to compute the projectors
$\Pi^\nabla_{\p+2}$ and~$\Pizp$; see~\eqref{H1-projector} and~\eqref{L2-projector}.
It is immediate to check that the stabilization
\[
\SE(\uh,\vh)
:=  \hE^{-1} (\uh,\vh)_{0, \partial\E}
       + \hE^{-2} (\Pizp \uh, \Pizp \vh)_{0,\E}
    \qquad\qquad \forall \uh, \vh \in \VhE
\]
satisfies Lemma~\ref{lemma:stability} and Corollary~\ref{corollary:stability}.

\subsection{Early essential literature and later contributions} \label{subsection:literature-nodal}
The first work containing mathematical proofs on the stabilization
is~\cite{BeiraodaVeiga-Lovadina-Russo-2017}.
This and other three works
constitute the early backbone of the stability analysis of the virtual element method.
In particular, we pinpoint and rapidly describe these four works,
which were published in 2017-2018:
\begin{itemize}
    \item \cite{BeiraodaVeiga-Lovadina-Russo-2017} contains the first analysis of the stabilization in two dimensional nodal conforming virtual elements;
    an arbitrary number of edges is allowed;
    stability bounds are there derived for different stabilizations
    (including the ``dofi-dofi'' one);
    \item \cite{Brenner-Guan-Sung:2017, Brenner-Sung:2018} contain the stability analysis for two and three dimensional nodal conforming virtual elements;
    arbitrary numbers of edges and faces are allowed;
    stability bounds are derived for different stabilizations
    (including the ``dofi-dofi'' one);
    \item \cite{Chen-Huang:2018} contains the stability analysis for two dimensional nodal conforming virtual elements;
    more standard geometries are employed.
\end{itemize}
It is our opinion that
the presentation in~\cite{Chen-Huang:2018} is the simplest one
and is therefore recommended to virtual elements novices;
the presentation in~\cite{BeiraodaVeiga-Lovadina-Russo-2017} and~\cite{Brenner-Guan-Sung:2017,Brenner-Sung:2018}
is more technical and applies to more general geometries,
whence these contributions are recommended to more expert readers.
\medskip

After 2018,
other works were devoted to additional mathematical aspects of the stability in nodal conforming virtual elements.
Amongst others, we mention
nodal conforming virtual elements on curved domains~\cite{BeiraodaVeiga-Russo-Vacca:2019};
robustness with respect to anisotropic elements~\cite{Cao-Chen:2018, Cao-Chen:2019};
virtual elements stabilized by means of higher-order polynomial energy projection terms~\cite{Berrone-Borio-Marcon:2022, Berrone-Borio-Marcon:2022B, Chen-Sukumar:2023, Lamperti-Cremonesi-Perego-Russo-Lovadina:2023};
lack of robustness with respect to the degree of accuracy of the method~\cite{BeiraodaVeiga-Chernov-Mascotto-Russo:2018};
nodal serendipity virtual elements~\cite{BeiraodaVeiga-Mascotto:2023};
presence of stabilization terms in residual error estimators~\cite{BeiraodaVeiga-Manzini:2015, Cangiani-Georgoulis-Pryer-Sutton:2017}.
We shall elaborate more on the two last topics in Section~\ref{section:p-version-adaptivity} below.

The literature stability analysis for other types of elements is more limited.
The reason for this is that nodal conforming virtual element functions
have a ``second order elliptic structure''.
In other virtual elements, local spaces consist of functions solving local problems (with polynomial data)
that are in general more technical to handle.

\section{Stability in other ``elliptic-type'' virtual elements} \label{section:ncVEM}
In this section, we investigate the role of the stabilization
for other ``elliptic-type'' virtual elements
that still have an ``elliptic structure''.
In particular, we focus on ``second order'' nonconforming virtual elements in Section~\ref{subsection:nc-VE};
investigate their stability properties in Section~\ref{subsection:stability-nc};
review conforming and nonconforming arbitrarily regular virtual element spaces in Section~\ref{subsection:arbitrary-VE}.

\subsection{``Nodal'' nonconforming virtual elements} \label{subsection:nc-VE}
Following~\cite{Ayuso-Lipnikov-Manzini:2016},
given a polygonal element~$\E$,
$\EcalE$ its set of edges,
and a positive integer number~$\p$,
we define the ``nodal'' nonconforming virtual element
\begin{equation} \label{nc-virtual-element-space}
\VhE:=
\left\{ \vh \in H^1(\E) \ \middle| \
\Delta \vh \in \Pbb_{\p-2}(\E),\;
\nbfE \cdot \nabla \vh{}_{|\e} \in \Pbb_{\p-1}(\e)
\quad \forall \e \in \EcalE \right\}.
\end{equation}
We endow the space~$\VhE$ with the following set of unisolvent degrees of freedom:
given~$\vh$ in~$\VhE$,
\begin{itemize}
    \item on each edge~$\e$ of~$\E$,
    given the scaled monomial basis~$\{ \malphae \}$
    of~$\Pbb_{\p-1}(\e)$ discussed in Section~\ref{section:introduction},
    the scaled moments
    \[
    \frac{1}{\vert \e \vert} \int_\e \malphae \ \vh;
    \]
    \item given the scaled monomial basis~$\{ \mboldalpha \}$
    of~$\Pbb_{\p-2}(\E)$ as in~\eqref{monomials}, the scaled moments
    \[
    \frac{1}{\vert \E \vert} \int_\E \mboldalpha \ \vh.
    \]
\end{itemize}
We collect the above degrees of freedom in the set
$\{ \dof_j \}_{j=1}^{\NE}$,
being~$\NE$ the dimension of~$\VhE$.

The degrees of freedom of~$\VhE$ allow for the computation of several polynomial projections~\cite{Ayuso-Lipnikov-Manzini:2016}.
In what follows, we need the operators~$\Pinabla : H^1(\E) \to \Pbb_{\p}(\E)$ as in~\eqref{H1-projector},
$\Piz : L^2(\E) \to \Pbb_{\p-2}(\E)$ as in~\eqref{L2-projector},
and~$\Pize: L^2(\e) \to \Pbb_{\p-1}(\e)$
defined as
\[
(\qpmoe, v-\Pize v)_{0,\e} 
\qquad\qquad \forall \qpmoe \in \Pbb_{\p-1}(\e).
\]
Given the degrees of freedom of a function~$\vh$ in~$\VhE$,
we can compute~$\Pinabla \vh$, $\Piz \vh$,
and~$\Pize \vh$ for all edges~$\e$;
see~\cite{Ayuso-Lipnikov-Manzini:2016}.
The standard discretization of the bilinear form
$\aE(\cdot,\cdot) := (\nabla \cdot,\nabla \cdot)_{0,\E}$
in ``nodal'' nonconforming virtual elements is as in~\eqref{local-discrete-bf}.

Also in the ``nodal'' nonconforming virtual element setting,
the stabilization~$\SE(\cdot , \cdot) : \VhE \times \VhE$ is a bilinear form that is computable via the degrees of freedom,
and is such that there exist $0 <\alpha_* \le \alpha^*$
independent of~$\hE$ for which
the stability bounds in~\eqref{stability-bounds} are valid.

\subsection{Stability bounds in ``nodal'' nonconforming virtual elements} \label{subsection:stability-nc}
We introduce two stabilizations for ``nodal'' nonconforming virtual elements.
The first one is the ``dofi-dofi'' stabilization as in~\eqref{dofi-dofi-stab}.
The second one,
which we shall refer to as ``projected'' stabilization, is
\begin{equation} \label{projected-stabilization-nc}
    \SE(\uh,\vh)
    := \hE^{-1} \sum_{\e \in \EcalE} (\Pize \uh, \Pize \vh)_{0, \e}
       + \hE^{-2} (\Piz \uh, \Piz \vh)_{0,\E}
    \quad \forall \uh, \vh \in \VhE.
\end{equation}
Both stabilization are computable using the degrees of freedom of~$\VhE$.

\begin{lem} \label{lemma:stability-nc}
The bilinear forms~$\SE(\cdot ,\cdot)$ in~\eqref{dofi-dofi-stab}
and~\eqref{projected-stabilization-nc} satisfy~\eqref{stability-bounds}.
\end{lem}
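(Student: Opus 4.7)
My plan is to mirror the proof of Lemma~\ref{lemma:stability}: establish the bound for the projected stabilization~\eqref{projected-stabilization-nc}, and then let the dofi-dofi bound follow by combining the resulting inequality with the arguments of~\cite{BeiraodaVeiga-Lovadina-Russo-2017, Brenner-Sung:2018, Chen-Huang:2018}, as in the conforming case.

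For the lower bound, I would start from the integration-by-parts identity
\[
\SemiNorm{\vh}_{1,\E}^2 = -\int_\E \Delta\vh\ \vh + \int_{\partial\E} (\nbfE \cdot \nabla\vh)\,\vh.
\]
The bulk term is handled exactly as in the conforming proof: since $\Delta\vh \in \Pbb_{\p-2}(\E)$, we have $-\int_\E \Delta\vh\ \vh = -\int_\E \Delta\vh\ \Piz\vh$, and Cauchy--Schwarz together with~\eqref{inverse-estimate-polynomial-Laplacian} produces the factor $\hE^{-1}\Norm{\Piz\vh}_{0,\E}$, which is precisely the bulk part of $\SE(\vh,\vh)^{1/2}$.

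The genuinely new ingredient is the boundary term. The defining property $\nbfE \cdot \nabla \vh{}_{|\e} \in \Pbb_{\p-1}(\e)$ of the nonconforming space in~\eqref{nc-virtual-element-space} provides an \emph{edgewise} $L^2$-orthogonality, which plays here the role of the continuity-on-$\partial\E$ property exploited in Lemma~\ref{lemma:stability}:
\[
\int_{\partial\E} (\nbfE \cdot \nabla \vh)\,\vh = \sum_{\e \in \EcalE} \int_\e (\nbfE \cdot \nabla \vh)\, \Pize \vh.
\]
This is what makes the boundary moments $\Pize\vh$ appear naturally. After Cauchy--Schwarz on each edge and a weighted discrete Cauchy--Schwarz, the task reduces to controlling $\hE^{1/2}\Norm{\nbfE\cdot\nabla\vh}_{0,\partial\E}$ by $\SemiNorm{\vh}_{1,\E}+\hE\Norm{\Delta\vh}_{0,\E}$. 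I expect this to be the main technical step: the $H^{-1/2}$--$H^{1/2}$ duality route of the conforming proof is not directly available, because $\Pize\vh$ is only piecewise polynomial on $\partial\E$ and in general fails to belong to $H^{1/2}(\partial\E)$ across vertices, so the polynomial inverse estimate~\eqref{inverse-estimate-polynomial-boundary} cannot be used verbatim. I would close the argument by pairing the scaled $H(\operatorname{div};\E)$ normal-trace bound $\Norm{\nbfE\cdot\nabla\vh}_{-\frac12,\partial\E}\lesssim\SemiNorm{\vh}_{1,\E}+\hE\Norm{\Delta\vh}_{0,\E}$ with an edgewise polynomial inverse inequality on $\nbfE\cdot\nabla\vh\in\Pbb_{\p-1}(\e)$, handling the vertex discontinuities through a localized $H^{1/2}_{00}$ lifting. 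A final application of~\eqref{inverse-estimate-polynomial-Laplacian} then absorbs $\hE\Norm{\Delta\vh}_{0,\E}$ into $\SemiNorm{\vh}_{1,\E}$ and yields the lower bound.

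For the upper bound, no new ideas are needed. The $L^2$-stability of $\Piz$ and $\Pize$ gives $\SE(\vh,\vh)\le\hE^{-1}\Norm{\vh}_{0,\partial\E}^2+\hE^{-2}\Norm{\vh}_{0,\E}^2$, which is bounded by $\lesssim\SemiNorm{\vh}_{1,\E}^2$ via the trace and Poincar\'e inequalities, using that $\vh\in\ker(\Pinabla)$ forces $\int_{\partial\E}\vh=0$ through the second condition in~\eqref{H1-projector}, exactly as in the conforming case.
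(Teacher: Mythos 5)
Your proposal is correct and follows essentially the same route as the paper's proof: edgewise insertion of~$\Pize$ via the polynomial normal derivative, Cauchy--Schwarz, an $L^2(\partial\E)\to H^{-\frac12}(\partial\E)$ inverse estimate for the piecewise polynomial normal trace paired with the Neumann trace inequality, and the virtual inverse estimate~\eqref{inverse-estimate-polynomial-Laplacian} to absorb $\hE\Norm{\Delta\vh}_{0,\E}$. Your remark that the inverse estimate must accommodate the vertex discontinuities of~$\nbfE\cdot\nabla\vh$ is well taken; the paper handles exactly this by invoking the piecewise-polynomial version from~\cite[Theorem~3.2]{Mascotto-Perugia-Pichler:2018}.
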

\begin{proof}
We prove only the assertion for the ``projected'' stabilization~\eqref{projected-stabilization-nc};
to this aim, we follow the guidelines in~\cite[Theorem~3.2]{Mascotto-Perugia-Pichler:2018}.
The details for the ``dofi-dofi'' stabilization~\eqref{dofi-dofi-stab} are similar but slightly more involved.

The following polynomial inverse inequality holds true
\cite[Theorem~3.2]{Mascotto-Perugia-Pichler:2018}:
\begin{equation} \label{polynomial-inverse-negative}
\Norm{\qp}_{0,\partial\E}
\lesssim \hE^{-\frac12} \Norm{\qp}_{-\frac12,\partial\E}
\qquad\qquad
\forall \qp \in \mathcal C^0(\partial\E),
\quad \qp|_{|\e} \in \Pbb_{\p} (\e) \quad \forall \e \in \EcalE.
\end{equation}
\medskip

\noindent\textbf{The lower bound.}
Integrating by parts, recalling the properties of functions in ``nodal'' nonconforming virtual element spaces in~\eqref{nc-virtual-element-space},
applying the Cauchy-Schwarz inequality twice,
using the polynomial inverse inequality~\eqref{polynomial-inverse-negative},
invoking the Neumann trace inequality~\cite[Theorem~A.33]{Schwab:1998},
and recalling the virtual inverse estimate~\eqref{inverse-estimate-polynomial-Laplacian},
we can write
\[
\begin{split}
\SemiNorm{\vh}^2_{1,\E}
& = - \int_\E \Delta\vh \ \vh 
        + \sum_{\e \in \EcalE} \int_{\e} (\nbfE \cdot \nabla \vh) \vh
= - \int_\E \Delta\vh \ \Piz \vh
        + \sum_{\e \in \EcalE} \int_{\e} (\nbfE \cdot \nabla \vh) \Pize \vh \\
& \le \Norm{\Delta\vh}_{0,\E} \Norm{\Piz\vh}_{0,\E}
      + \sum_{\e \in \EcalE} \Norm{\nbfE \cdot \nabla \vh}_{0,\e} \Norm{\Pize \vh}_{0,\e} \\
& \le \Norm{\Delta\vh}_{0,\E} \Norm{\Piz\vh}_{0,\E}
      + \Norm{\nbfE \cdot \nabla \vh}_{0,\partial \E}
      \Big(\sum_{\e \in \EcalE}  \Norm{\Pize \vh}_{0,\e}^2 \Big)^{\frac12} \\
& \lesssim \Norm{\Delta\vh}_{0,\E} \Norm{\Piz\vh}_{0,\E}
      + \Norm{\nbfE \cdot \nabla \vh}_{-\frac12,\partial \E}
      \hE^{-\frac12} \Big(\sum_{\e \in \EcalE}  \Norm{\Pize \vh}_{0,\e}^2 \Big)^{\frac12} \\
& \lesssim \Norm{\Delta\vh}_{0,\E} \Norm{\Piz\vh}_{0,\E}
      + (\hE \Norm{\Delta\vh}_{0,\E} + \SemiNorm{\vh}_{1,\E})
      \hE^{-\frac12} \Big(\sum_{\e \in \EcalE}  \Norm{\Pize \vh}_{0,\e}^2 \Big)^{\frac12} \\
& \lesssim \SemiNorm{\vh}_{1,\E}
        \left( \hE^{-1} \Norm{\Piz \vh}_{0,\E}
      + \hE^{-\frac12}\Big(\sum_{\e \in \EcalE}  \Norm{\Pize \vh}_{0,\e}^2 \Big)^{\frac12} \right).
\end{split}
\]

\medskip

\noindent\textbf{The upper bound.}
Using the stability of~$\Piz$ and~$\Pize$
in the~$L^2(\E)$ and~$L^2(\e)$ (for all edges~$\e$ of~$\E$) norms,
respectively,
the trace inequality,
and the Poincar\'e inequality,
the upper bound in~\eqref{stability-bounds} follows:
\[
\begin{split}
\SE(\vh,\vh)
& := \hE^{-1} \sum_{\e \in \EcalE} \Norm{\Pize\vh}^2_{0,\e}
    + \hE^{-2} \Norm{\Piz \vh}^2_{0,\E}
\le \hE^{-1}  \sum_{\e \in \EcalE} \Norm{\vh}^2_{0,\e}
    + \hE^{-2} \Norm{\vh}^2_{0,\E}\\
& = \hE^{-1}  \Norm{\vh}^2_{0,\partial \E}
    + \hE^{-2} \Norm{\vh}^2_{0,\E}
\lesssim \SemiNorm{\vh}^2_{1,\E}.
\end{split}
\]
\end{proof}

As in the case of nodal conforming virtual elements,
Lemma~\ref{lemma:stability-nc} can be generalized
in the sense of Corollary~\ref{corollary:stability}.

Moreover, as in Section~\ref{subsection:stab-nodal},
it is possible to define an enhanced version
of the space in~\eqref{nc-virtual-element-space}:
\[
\begin{split}
\VhE:=
\Big\{ \vh \in H^1(\E) 
& \ \Big| \
\Delta \vh \in \Pbb_{\p}(\E);
\quad \nbfE \cdot \nabla \vh{}_{|\e} \in \Pbb_{\p-1}(\e)
\quad \forall \e \in \EcalE; \\
& \quad \int_\E (\vh-\Pinabla \vh) \mboldalpha = 0 \quad
    \forall \vert \boldalpha \vert=\p-1,\p \Big\}.
\end{split}
\]
We can endow this space with the same degrees of freedom
as for the space in~\eqref{nc-virtual-element-space},
which allow for the computation of higher order polynomial projectors~$\Pi^\nabla_{\p+2}$ and~$\Pi_{\p}^0$;
see~\eqref{H1-projector} and~\eqref{L2-projector}.

It can be checked that the following stabilization satisfies Lemma~\ref{lemma:stability-nc}:
\[
\SE(\uh,\vh)
:=  \hE^{-1} \sum_{\e \in \EcalE} (\Pize \uh, \Pize \vh)_{0, \e}
       + \hE^{-2} (\Pi^0_\p \uh, \Pi^0_\p \vh)_{0,\E}
    \qquad\qquad \forall \uh, \vh \in \VhE .
\]

Other theoretical results on the stabilization for ``nodal'' nonconforming virtual elements
can be found in~\cite{Bertoluzza-Manzini-Pennacchio-Prada:2022}.

\subsection{Arbitrarily regular virtual element spaces} \label{subsection:arbitrary-VE}
Arbitrarily regular virtual element spaces were one of the first generalization
of the nodal conforming virtual element method
detailed in Section~\ref{subsection:nodal-VE}
and trace back to almost the inception of the method;
see~\cite{BeiraodaVeiga-Manzini:2014, Brezzi-Marini:2013}.
Over the years, several conforming and nonconforming variants have been proposed;
see, e.g., \cite{Antonietti-Manzini-Verani:2018, Antonietti-Manzini-Verani:2020, Zhao-Chen-Zhang:2016}.

Differently from standard $\mathcal C^0$ elements,
arbitrarily regular virtual elements
are designed so as global $\mathcal C^k$ spaces, $k>0$, can be constructed.
This is accomplished by an enrichment of interface and bulk degrees of freedom.
In particular, local virtual element spaces are the set of solutions to polyharmonic problems with polynomial data.

Thence, the ``elliptic structure'' of the spaces can be still employed while deriving the stability bounds.
For this reason, the analysis is quite similar to that for standard ``nodal'' conforming and nonconforming virtual elements.
We refer to~\cite{Chen-Huang:2020} and~\cite{Chen-Huang-Wei:2022} for the stability bounds in nonconforming and conforming arbitrarily regular virtual elements.
It is quite interesting the fact that both references are rather recent (2020 and 2022, respectively).
This is probably motivated by the fact that
(\emph{i}) stability bounds are derived in any dimension;
(\emph{ii}) even though the structure of the arbitrarily regular virtual element spaces is still ``elliptic'',
the interaction of high order derivatives and the presence of multiple boundary conditions
render the analysis of the stability bounds quite involved.

We report here, for the 2D case, a stability result for $\mathcal C^1$ virtual elements.
Let~$\p\ge3$
\footnote{The case~$\p=2$ requires a slightly different but similar treatment,
and is therefore omitted.
and consider the space
\[
\VhE:=
\left\{
\vh \in H^2(\E) \ \middle| \
\Delta^2 \vh \in \Pbb_{\p-4}(\E),\;
\vh{}_{|\e} \in \Pbb_\p(\e),\;
\nbfE \cdot \nabla \vh \in \Pbb_{\p-1}(\e)
\quad \forall \e \in \EcalE
\right\}.
\]
We endow this space with the following set of degrees of freedom~\cite{BeiraodaVeiga-Manzini:2014, Huang-Yu:2021}: given~$\vh$ in~$\VhE$,
\begin{itemize}
    \item the point values of~$\vh$ at the vertices of~$\E$;
    \item the point values of~$\hE \partial_1 \vh$
    and~$\hE \partial_2 \vh$ at the vertices of~$\E$,
    where $\partial_1$ and~$\partial_2$ are the derivatives along the edges matching at such vertices;
    \item for all edges~$\e$ in~$\EcalE$,
    given~$\{ \malphae \}$ the scaled and shifted monomial basis of~$\Pbb_{\p-4}(\e)$,
    the moments
    \[
    \frac{1}{\he} \int_{\e} \malphae \ \vh;
    \]
    \item for all edges~$\e$ in~$\EcalE$,
    given~$\{ \malphae \}$ the scaled and shifted monomial basis of~$\Pbb_{\p-3}(\e)$,
    the moments
    \[
    \int_{\e} \malphae \ \nbfE \cdot \nabla \vh;
    \]
    \item given~$\{ \mboldalpha \}$ the scaled and shifted monomial basis of~$\Pbb_{\p-4}(\e)$,
    the moments
    \[
    \frac{1}{\vert \E \vert} \int_{\E} \mboldalpha \ \vh.
    \]
\end{itemize}
Based on such degrees of freedom, the following stabilization is computable:
\begin{equation} \label{stabilization-biharmonic}
\SE(\uh,\vh)
:= \hE^{-4} (\Pi^0_{\p-4}\uh, \Pi^0_{\p-4}\vh)_{0,\E}
   + \hE^{-3} (\uh,\vh)_{0,\partial \E}
   + \hE^{-1} (\nbfE\cdot\nabla\uh, \nbfE\cdot\nabla\vh)_{0,\partial \E} .
\end{equation}
Proceeding as in Section~\ref{subsection:stab-nodal}
and~\cite{Chen-Huang-Wei:2022} yields the following result.
\begin{lem}
The stabilization in~\eqref{stabilization-biharmonic}
satisfies the two following bounds:
there exist~$0< \alpha_* \le \alpha^*$ such that
\[
\begin{split}
& \alpha_* \SemiNorm{\vh}_{2,\E}^2 
\le \SE(\vh,\vh)
\quad \forall \vh \in \VhE,\\
& \SE(v,v)
\le \alpha^* \SemiNorm{v}_{2,\E}^2 
\qquad \forall v \in H^1(\E)
\quad \text{ such that } \quad
(v,1)_{0, D} =0,
\; (\nabla v, (1,1))_{0,D}=0,
\end{split}
\]
where~$D$ is any subset with nonzero measure
of either~$\E$ or~$\partial \E$.
\end{lem}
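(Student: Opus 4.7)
The plan is to mirror the two-step template of Lemma~\ref{lemma:stability} and Lemma~\ref{lemma:stability-nc}, but with $H^1$-Green's identity replaced by the biharmonic one and with the bulk polynomial data being $\Delta^2 \vh \in \Pbb_{\p-4}(\E)$ instead of $\Delta \vh \in \Pbb_{\p-2}(\E)$. The three summands of~\eqref{stabilization-biharmonic} are calibrated exactly so as to match the three interior/boundary contributions that pop out of this integration by parts (bulk data, trace of $\vh$, trace of the normal derivative), which is the reason to expect that essentially the same argument will work.

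For the lower bound, I would start from
\[
\SemiNorm{\vh}_{2,\E}^2 \simeq \int_\E (\Delta \vh)^2
\]
modulo boundary terms, and use the second Green formula
\[
\int_\E (\Delta \vh)^2
= \int_\E (\Delta^2 \vh)\, \vh
+ \int_{\partial \E} (\Delta \vh)\, (\nbfE\cdot\nabla \vh)
- \int_{\partial \E} \partial_{\nbfE}(\Delta \vh)\, \vh.
\]
Since $\Delta^2 \vh \in \Pbb_{\p-4}(\E)$, the bulk term becomes $\int_\E \Delta^2 \vh \cdot \Pi^0_{\p-4} \vh$, which by Cauchy--Schwarz and the virtual inverse estimate $\Norm{\Delta^2 \vh}_{0,\E} \lesssim \hE^{-2}\SemiNorm{\vh}_{2,\E}$ (the $H^2$ analogue of~\eqref{inverse-estimate-polynomial-Laplacian}, as in~\cite{Chen-Huang-Wei:2022}) is controlled by $\hE^{-2}\SemiNorm{\vh}_{2,\E}\Norm{\Pi^0_{\p-4}\vh}_{0,\E}$. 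The two boundary terms I would handle via negative-order Sobolev duality, Neumann-type trace inequalities for $\Delta \vh$ and $\partial_{\nbfE}(\Delta \vh)$, and the polynomial boundary inverse inequality~\eqref{inverse-estimate-polynomial-boundary}; this produces the scalings $\hE^{-3/2}\Norm{\vh}_{0,\partial\E}$ and $\hE^{-1/2}\Norm{\nbfE\cdot\nabla\vh}_{0,\partial\E}$, matching exactly the remaining two summands of~\eqref{stabilization-biharmonic}. Factoring out $\SemiNorm{\vh}_{2,\E}$ then yields the lower bound.

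For the upper bound, the $L^2$-stability of $\Pi^0_{\p-4}$ gives $\hE^{-4}\Norm{\Pi^0_{\p-4} v}_{0,\E}^2 \le \hE^{-4}\Norm{v}_{0,\E}^2$, and the usual trace inequality controls $\hE^{-3}\Norm{v}_{0,\partial\E}^2$ and $\hE^{-1}\Norm{\nbfE\cdot\nabla v}_{0,\partial\E}^2$ by $\hE^{-4}\Norm{v}_{0,\E}^2 + \hE^{-2}\SemiNorm{v}_{1,\E}^2 + \SemiNorm{v}_{2,\E}^2$. The two orthogonality conditions $(v,1)_{0,D}=0$ and $(\nabla v,(1,1))_{0,D}=0$ are the right quantity to run a Poincar\'e inequality first on $v$ and then componentwise on $\nabla v$, giving $\Norm{v}_{0,\E} \lesssim \hE \SemiNorm{v}_{1,\E} \lesssim \hE^2 \SemiNorm{v}_{2,\E}$, after which every term collapses to a multiple of $\SemiNorm{v}_{2,\E}^2$.

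The main obstacle I anticipate is the careful bookkeeping of the boundary terms in the biharmonic Green formula: unlike the $H^1$ case, one has two distinct boundary contributions of different regularity orders ($\Delta \vh$ tested against $\nbfE\cdot\nabla \vh$, and $\partial_{\nbfE}(\Delta \vh)$ tested against $\vh$), and each of them must be split so that the half-derivative of the test factor is absorbed into $\SemiNorm{\vh}_{2,\E}$ via a trace inequality while the polynomial factor is pushed to $L^2$ via a polynomial inverse inequality with the correct negative power of $\hE$. Any mismatch in these scalings would break the clean factorization by $\SemiNorm{\vh}_{2,\E}$ at the end of the lower-bound chain, so this is where the calibration of the exponents $\hE^{-4}$, $\hE^{-3}$, $\hE^{-1}$ in~\eqref{stabilization-biharmonic} is truly used.
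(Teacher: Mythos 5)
Your proposal follows essentially the same route the paper intends: the paper gives no written proof for this lemma, stating only that one should proceed as in Section~\ref{subsection:stab-nodal} and~\cite{Chen-Huang-Wei:2022}, i.e., integration by parts against the polynomial bulk datum (here the biharmonic Green identity with $\Delta^2\vh\in\Pbb_{\p-4}(\E)$), virtual and polynomial inverse estimates with negative-order duality on the boundary for the lower bound, and $L^2$-stability of the projector plus trace and (iterated) Poincar\'e inequalities for the upper bound. Your sketch supplies more detail than the paper does and correctly identifies the delicate point, namely the two boundary pairings of different regularity and the matching of the $\hE^{-4}$, $\hE^{-3}$, $\hE^{-1}$ scalings.
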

}

\section{Stability in ``nonelliptic'' virtual elements} \label{section:non-elliptic}
In this section, we review the literature on the stabilization
for some ``nonelliptic'' virtual elements:
face (Section~\ref{subsection:face-VE});
edge (Section~\ref{subsection:edge-VE});
Stokes-like (Section~\ref{subsection:Stokes-VE});
immersed (Section~\ref{subsection:immersed-VE}) virtual elements.

\subsection{Face virtual elements} \label{subsection:face-VE}
Face virtual elements were designed in~\cite{Brezzi-Falk-Marini:2014, BeiraodaVeiga-Brezzi-Marini-Russo:2016-mixed, BeiraodaVeiga-Brezzi-Marini-Russo:2016-divcurl}
and generalize the Raviart-Thomas elements to polytopic meshes.
Both in two and three dimensions,
local face spaces consist of functions solving local ``div-rot'' problems and with polynomial normal components on faces.
The degrees of freedom consist of bubble-like moments in the element
and moments of normal components over faces.

The first stability analysis (on 2D curved elements) can be found in~\cite{Dassi-Fumagalli-Losapio-Scialo-Scotti-Vacca:2021}.
Lowest order two and three dimensional face spaces were analyzed in~\cite{BeiraodaVeiga-Mascotto:2022},
general order two and three dimensional (standard and serendipity) face spaces in~\cite{BeiraodaVeiga-Mascotto-Meng:2022},
and three dimensional face spaces on curved polyhedra in~\cite{Dassi-Fumagalli-Scotti-Vacca:2022}.
Related results are discussed in~\cite{Zhao-Zhang:2021}.

Compared to the elliptic case, the stability analysis
is here complicated by the ``div-rot'' structure of the spaces.
Notably, the analysis needs the design of certain Helmholtz-like decompositions,
as well as employing more sophisticated results from the theory of vector potential and div-rot systems;
see, e.g., \cite{Costabel:1990, Amrouche-Bernardi-Dauge-Girault:1998}.

\subsection{Edge virtual elements} \label{subsection:edge-VE}
Edge virtual elements were first designed in~\cite{BeiraodaVeiga-Brezzi-Marini-Russo:2016-divcurl}
and generalize the N\'ed\'elec element to polytopic elements.
In three dimensions, local edge spaces consist of functions solving local ``div-curlcurl'' problems inside the element;
local ``div-rot'' problems on faces;
polynomial tangential traces over edges.
The degrees of freedom consist of bubble-like moments in the element
and on faces, and moments of tangential components over edges.

The first stability analysis (lowest order, in two and three dimensions)
can be found in~\cite{BeiraodaVeiga-Mascotto:2022};
the general order (standard and serendipity) case was later tackled in~\cite{BeiraodaVeiga-Mascotto-Meng:2022}.

Compared to the the case of face elements,
the analysis of edge elements in three dimensions
is further complicated by the fact
that virtual element functions are solutions to different types of problems inside the element (``div-curlcurl'' problems)
and on faces (``div-rot'' problems),
which require a different treatment to establish
suitable stability estimates.

\subsection{Stokes-like virtual elements} \label{subsection:Stokes-VE}
Lowest order Stokes-like virtual elements appeared in~\cite{Antonietti-BeiraodaVeiga-Mora-Verani:2014}
and were later generalized to the general order case in~\cite{BeiraodaVeiga-Lovadina-Vacca:2017}.
Local spaces consist of functions solving local Stokes-like
with polynomial data;
in particular, the divergence of Stokes-like virtual element functions is polynomial.
This allows for the design of methods that are automatically divergence free.
In two dimensions, the degrees of freedom consist of bubble-like moments in the element and point values on the boundary;
in three dimensions, of element and face moments,
and point values on the edges.

Stokes-like virtual elements are very successful
and were the topic of a plethora of articles.
However, to the best of our knowledge,
the first and only paper dealing with the stability analysis
of Stokes-like virtual element spaces is~\cite{BeiraodaVeiga-Mascotto-Meng:2023}.
The stability bounds are essentially derived based on the stable inf-sup structure of the local Stokes problems,
and polynomial and (novel) virtual element inverse estimates.

\subsection{Immersed-like virtual elements} \label{subsection:immersed-VE}
More recently, immersed-like virtual elements have been introduced in two and three dimensions
for elliptic and Maxwell problems;
see~\cite{Cao-Chen-Guo:2022, Cao-Chen-Guo-Lin:2022}.
Such spaces are particularly effective when handling problems
with nonsmooth coefficients.
The irregular behaviour of the exact solution
is captured by local immersed-like virtual element functions in a Trefftz fashion,
as they are solutions to local problems involving the same nonsmooth coefficients as in the continuous problem.

\section{Dependence on the degree of accuracy and error estimators} \label{section:p-version-adaptivity}
Two situations where the presence of the stabilization
might be troublesome are
the $\p$- and $\h\p$-versions of the method,
and when proving the reliability and efficiency of error estimators.
We elaborate on these two topics
in Sections~\ref{subsection:p-version} and~\ref{subsection:residual},
respectively.

\subsection{Stability bounds depending on the degree of accuracy} \label{subsection:p-version}
The $\p$- and $\h\p$-versions of a Galerkin method
aim at achieving convergence
by increasing the dimension of the local approximation spaces (while keeping the mesh fixed),
and by mesh refinement and $\p$-version at once, respectively.

Stability bounds with explicit dependence on the degree of accuracy~$\p$ were investigated in several works.
Amongst them, we recall the following contributions:
$\p$-explicit stability bounds were first derived in~\cite{BeiraodaVeiga-Chernov-Mascotto-Russo:2018}
for the ``projected'' stabilization in~\eqref{projected-stabilization};
this result was extended~\cite{Antonietti-Mascotto-Verani:2018} to the ``dofi-dofi'' stabilization in~\eqref{dofi-dofi-stab};
the $\p$-version nonconforming virtual elements
was addressed in~\cite{Mascotto-Perugia-Pichler:2018}.

In all cases, at least one of the constants~$\alpha_*$ and~$\alpha^*$ in~\eqref{stability-bounds} depend on~$\p$.
This is not surprising as several inverse estimates are employed to derive the lower bound in~\eqref{stability-bounds};
see the lower bounds in Lemmas~\ref{lemma:stability} and~\ref{lemma:stability-nc}.
However, the suboptimality with respect to the degree of accuracy
is rather mild in practice;
see~\cite[Section~4.1]{BeiraodaVeiga-Chernov-Mascotto-Russo:2018}.

\subsection{The stability in residual error estimators} \label{subsection:residual}
In finite elements,
an error estimator~$\eta$
given by the combination of local error estimators~$\etaE$
is a quantity that can be computed by means of the discrete solution~$\uh$ to the method
and has to be designed so as to be comparable
to the error of the method.

For the Poisson problem,
standard reliability and efficiency estimates read
\begin{equation} \label{reliability-efficiency-FEM}
\SemiNorm{u-\uh}_{1,\Omega}
\lesssim 
\eta(\uh) + \HOT_{\Omega}(f),
\qquad\qquad
\etaE(\uh) \lesssim
\SemiNorm{u-\uh}_{1,\omegaE}
+ \HOT_{\omegaE}(f),
\end{equation}
where~$\omegaE$ is the patch of elements around~$\E$
and~$\HOT_D(f)$ is a high-order oscillation term on a domain~$D$ involving the right-hand side~$f$.

Residual error estimators in virtual elements~\cite{BeiraodaVeiga-Manzini:2015, Cangiani-Georgoulis-Pryer-Sutton:2017}
satisfy analogous bounds that differ from~\eqref{reliability-efficiency-FEM}
in two aspects:
(\emph{i}) the discrete solution~$\uh$ is not available in closed form, so it must be replaced by a polynomial projection;
(\emph{ii}) the stabilization appears in the error estimator
and error estimates.

So, for a computable virtual element error estimator~$\etatilde$
given by the combination of local virtual element error estimators~$\etatildeE$,
the virtual element counterpart of~\eqref{reliability-efficiency-FEM} has the following form:
\[
\begin{split}
& \SemiNorm{u-\uh}_{1,\Omega}
\lesssim 
\etatilde(\Pinabla \uh)
+ \sum_{\E \in \taun} \SE((I-\Pinabla)\uh, (I-\Pinabla)\uh)^{\frac12}
+ \HOT_\Omega(f),\\
& \etatildeE(\Pinabla \uh) \lesssim
\SemiNorm{u-\uh}_{1,\omegaE}
+ \!\!\!\!\sum_{\E \in \taun,\ \E \subset\omegaE}\!\!\!\!
            \SE((I-\Pinabla)\uh, (I-\Pinabla)\uh)^{\frac12}
+ \HOT_{\omegaE}(f) .
\end{split}
\]
The presence of the extra stabilization term in the bounds above
might be inauspicious.
On the one hand, such term
is not robust with respect to the degree of accuracy
as discussed in Section~\ref{subsection:p-version}.
On the other hand,
adaptive mesh refinements may lead to polygonal meshes
with several hanging nodes and possibly very small facets;
in that case, one should be extremely careful
in designing stabilization that are robust with respect to badly-shaped elements.

A partial breakthrough facing these issues
is contained in~\cite{BeiraodaVeiga-Canuto-Nochetto-Vacca-Verani:2022},
where the stabilization term is removed in the reliability and efficiency estimates.
However, this has been proved under restrictive assumptions:
meshes must consist of elements with triangular shape;
the 3D version is not covered;
lowest order elements are employed.
So, general reliability and efficiency estimates that are robust
with respect to the stabilization are currently not available,
albeit the subject of current study.

\section{The stability bounds may imply interpolation estimates} \label{section:interpolation}
Interpolation estimates by means of functions in the virtual element space
are an important ingredient in the error analysis of the method.
In this section, we first report standard ways of proving interpolation estimates
in conforming and nonconforming virtual elements;
see Sections~\ref{subsection:interpolation-standard-conforming} 
and~\ref{subsection:interpolation-standard-nonconforming}, respectively.
Next, in Section~\ref{subsection:interpolation-new},
we show a more recent strategy
to derive interpolation estimates based on the stability bounds
covering the conforming and nonconforming cases at once.

\subsection{Standard proof for interpolation estimates:
conforming elements} \label{subsection:interpolation-standard-conforming}
The first proof of interpolation estimates
for nodal conforming virtual elements traces back to 2015;
see~\cite{Mora-Rivera-Rodriguez:2015}.
Several variants have been proposed ever since,
most of them extending the ideas therein.

Here, we only provide details on the two dimensional nodal conforming virtual elements;
see~\cite[Proposition~4.2]{Mora-Rivera-Rodriguez:2015}.
Given an element~$\E$ split into a shape-regular triangulation~$\tauE$,
$v$ in~$H^1(\E)$,
$\qp$ in~$\Pbb_\p(\E)$,
and~$\vC$ the Cl\'ement quasi-interpolant of~$v$ over~$\tauE$,
we define~$\vI$ over~$\E$ as the weak solution to
\[
\begin{cases}
    -\Delta \vI = - \Delta \qp & \text{in } \E\\
    \vI         = \vC           & \text{on } \partial \E.
\end{cases}
\]
A minimum energy argument entails
\[
\SemiNorm{v-\vI}_{1,\E}
\le 2 \SemiNorm{v-\qp}_{1,\E}
          + \SemiNorm{v-\vC}_{1,\E},
\]
whence interpolation estimates follow from 
polynomial quasi-interpolation
and approximation estimates.

Thus, deriving interpolation estimates for conforming virtual elements
strongly hinges upon using the structure of the local virtual element spaces,
i.e., the stability of the continuous formulation.
When considering other types of virtual elements,
this may result in interpolation estimates
that are much harder to derive.
Prototypical examples of this fact are interpolation estimates in Stokes-like~\cite{BeiraodaVeiga-Lovadina-Vacca:2017};
Hellinger-Reissner-like~\cite{Artioli-DeMiranda-Lovadina-Patruno:2017, Dassi-Lovadina-Visinoni:2020};
face and edge~\cite{BeiraodaVeiga-Mascotto:2022, BeiraodaVeiga-Mascotto-Meng:2022} virtual elements.

\subsection{Standard proof for interpolation estimates:
nonconforming elements} \label{subsection:interpolation-standard-nonconforming}
The first proof of interpolation estimates
for ``nodal'' nonconforming virtual elements traces back to 2018;
see~\cite{Mascotto-Perugia-Pichler:2018}.
Several variants have been proposed ever since,
most of them extending the ideas therein.

Here, we only provide details on the two dimensional ``nodal'' nonconforming virtual elements;
see~\cite[Proposition~3.1]{Mascotto-Perugia-Pichler:2018}
and the later work~\cite[Corollary~4.1]{Huang-Yu:2021}.

Given~$\E$ in~$\taun$ and~$v$ in~$H^1(\E)$,
we define~$\vI$ in the ``nodal'' nonconforming space~$\VhE$
defined in~\eqref{nc-virtual-element-space}
as the degrees of freedom interpolant of~$v$.
Notably, $\vI$ is uniquely identified by
\[
\int_\E (v-\vI) \qpmtE = 0 \quad \forall \qpmtE \in \Pbb_{\p-2}(\E),
\qquad
\int_\e (v-\vI) \qpmoe = 0 \quad \forall \qpmoe \in \Pbb_{\p-1}(\e)
\quad \forall \e \in \EcalE.
\]
For any~$\qpE$ in~$\Pbb_\p(\E)$,
recalling that~$\Delta \vI$ belongs to~$\Pbb_{\p-2}(\E)$
and~$\nbfE \cdot \nabla \vI{}_{|\e}$ belongs to~$\Pbb_{\p-1}(\e)$
for all the edges~$\e$ of~$\E$,
we readily deduce
\[
\begin{split}
\SemiNorm{v-\vI}_{1,\E}
& = - \int_{\E} \Delta (v-\vI) \ (v-\vI) 
    + \sum_{\e \in \EcalE} \int_\e \nbfE \cdot \nabla (v-\vI)\ (v-\vI)\\
& = - \int_{\E} \Delta (v-\qpE) \ (v-\vI) 
    + \sum_{\e \in \EcalE} \int_\e \nbfE \cdot \nabla (v-\qpE)\ (v-\vI)\\
& = \int_\E \nabla(v-\qpE) \cdot \nabla(v-\vh)
  \le \SemiNorm{v-\qpE}_{1,\E}   \SemiNorm{v-\vI}_{1,\E},
\end{split}
\]
whence we deduce
\[
\inf_{\vh \in \VhE} \SemiNorm{v-\vh}_{1,\E}
\le \SemiNorm{v-\vI}_{1,\E}
\le \inf_{\qp \in \Pbb_\p(\E)} \SemiNorm{v-\qpE}_{1,\E} .
\]
Extending this approach to other types of virtual elements
may be not immediate.

\subsection{A more recent proof for interpolation estimates} \label{subsection:interpolation-new}
There is a recent alternative approach originally employed in~\cite{BeiraodaVeiga-Mascotto-Meng:2023}
to derive interpolation estimates,
which applies to conforming and nonconforming virtual elements at once,
and is based on having stability bounds at hand.

Again, to simplify the presentation we stick to the two dimensional nodal
conforming virtual element case
and consider the stabilization in~\eqref{dofi-dofi-stab},
even though the following steps might be generalized
to several other elements and dimensions,
as well as to other stabilizations.
Let~$\E$ be a given element, $v$ be in~$H^s(\E)$, $s>1$,
and~$\qp$ be any function in~$\Pbb_\p(\E)$
with the same average of~$v$ over~$\E$.
We denote the degrees of freedom interpolant of any sufficiently smooth function
by adding a subscript~$\cdot_I$ to such function.
Observe that~$\qp = (\qp)_I$.
Recalling Corollary~\ref{corollary:stability}
and using that~$\SE(\vI,\vI) = \SE(v,v)$
by definition~\eqref{dofi-dofi-stab}, we readily get
\[
\begin{split}
\SemiNorm{v-\vI}_{1,\E}
& \le \SemiNorm{v-\qp}_{1,\E} + \SemiNorm{\vI-\qp}_{1,\E}
  \le \SemiNorm{v-\qp}_{1,\E} 
  + \alpha_*^{-1} \SE(\vI-\qp,\vI-\qp)^{\frac12}\\
& = \SemiNorm{v-\qp}_{1,\E} 
  + \alpha_*^{-1} \SE((v-\qp)_I,(v-\qp)_I)^{\frac12}
  = \SemiNorm{v-\qp}_{1,\E} 
  + \alpha_*^{-1} \SE(v-\qp, v-\qp)^{\frac12}\\
& \le \left(1 + \frac{\alpha^*}{\alpha_*} \right)
        \SemiNorm{v-\qp}_{1,\E}.
\end{split}
\]
Thus, upon having at disposal stability bounds of the form~\eqref{stability-bounds},
we proved that the best interpolation error is bounded
by the best polynomial error up to the stability constants.
The above estimates readily extend to nonconforming nodal virtual elements.

\section{Conclusions and outlook} \label{section:conclusions}
The first paper on the virtual element method
was published ten years ago.
The analysis of the stabilization of the method
is more recent and is still a current and lively area of research.
Several works on the stability in nodal conforming virtual elements
have been published; fewer on other types of virtual elements.
In this contribution, we reviewed the literature
about the role and theoretical analysis
of the stabilization in the virtual element method.
We presented paradigmatic proofs for ``nodal'' conforming and nonconforming virtual elements;
we only mentioned the main results for other types of elements.
We further underlined that the stability bounds
imply interpolation estimates.



{\footnotesize
\bibliography{bibliogr.bib}}
\bibliographystyle{plain}

\end{document}